\documentclass[a4paper,12pt]{article}

\usepackage[latin1]{inputenc}
\usepackage{eucal}
\usepackage{exscale}
\usepackage{latexsym}
\usepackage[T1]{fontenc}
\usepackage{amsmath}
\usepackage{graphicx}
\usepackage{amsfonts,makeidx,color,amsmath, amssymb,graphics,mathrsfs,eurosym,amsthm}
\usepackage{epsfig}

\pagestyle{plain}

\newtheorem{theorem}{Theorem}[section]
\newtheorem{lemma}{Lemma}[section]
\newtheorem{proposition}{Proposition}[section]
\theoremstyle{remark}

\def\RR{\mathbb{R}}

\title{\bf Projection methods based on spline quasi-interpolation for Urysohn integral equations}
\author{Catterina Dagnino, Angelo Dallefrate and Sara Remogna \thanks{Department of Mathematics, University of Torino, via C. Alberto 10, 10123 Torino, Italy ({\tt catterina.dagnino@unito.it, angelodallefrate@libero.it, sara.remogna@unito.it})}}

\date{}
\begin{document}

\maketitle

\begin{abstract}
In this paper we propose projection methods based on spline quasi-interpo\-lating projectors of degree $d$ and class $C^{d-1}$ on a bounded interval for the numerical solution of nonlinear integral equations. We prove that they have high order of convergence $2d+2$ if $d$ is odd and $2d+3$ if $d$ is even. We also present the implementation details of the above methods. Finally, we provide numerical tests, that confirm the theoretical results. Moreover, we compare the theoretical and numerical results with those obtained by using a collocation method based on the same spline quasi-interpolating projectors.
\end{abstract}\vskip 10pt

{\sl Keywords}: Nonlinear integral equation; Spline quasi-interpolation; Spline projector

{\sl Subject classification AMS (MOS)}: 65R20, 65J15, 65D07

\section{Introduction}\label{intro}

Integral equations occur in different fields of mathematical physics, engineering and mechanics. In particular, many problems like physical applications, potential theory and electrostatics are reduced to the solution of nonlinear integral equations, as noticed in \cite{AST} and references therein. One of the most important kinds of nonlinear integral equations is the Urysohn one, defined in the following form
\begin{equation}\label{e}
x-K(x)=f,
\end{equation}
where $K$ is the Urysohn integral operator 
$$
K(x)(s)=\int_{0}^{1}k(s,t,x(t))dt, \qquad s\in [0,1], \qquad x\in X=C[0,1].
$$
The kernel $k(s, t, u)$ and the right hand side $f$ are real valued continuous functions and we assume that (\ref{e}) has a unique solution $\varphi$. We remark that this kind of equation includes the Hammerstein one.

Classical methods to solve nonlinear integral equations are the projection ones. The most popular ones are the Galerkin and the collocation methods, based on a sequence $\{\pi_n\}$ of orthogonal projectors and interpolatory projectors, respectively, onto finite dimensional subspaces $X_n$ approximating $X$ (see e.g. \cite{AP,G} and references therein). Other important methods to numerically solve (\ref{e}) are the Nystr\"om ones (see e.g \cite{ASST,AST,A} and references therein).

Recently, in \cite{GKV}, a modified projection method, providing high order of convergence with respect to classical projection methods, has been proposed to solve (\ref{e}), by using both orthogonal projectors and interpolatory projectors in the space of piecewise polynomials of degree $d$ at most continuous. 

In this paper we intend to use the logical scheme proposed in \cite{GKV}, with the same smoothness requirement for  $k$, $f$ and $\varphi$, in order to construct projection methods based on spline quasi-interpolation projectors (abbr. QIPs) of degree $d$ and class $C^{d-1}$. We show that such methods have high order of convergence $2d+2$ if $d$ is odd and $2d+3$ if $d$ is even. 

We remark that, recently, the use of the spline quasi-interpolation has been proved to work well for the approximation of the solution of linear integral equations (see e.g. \cite{ASS1,ASS2,ASS,DR,DRS}). In particular, in \cite{ASS1} a degenerate kernel method based on (left and right) partial approximation of the kernel by a quartic spline quasi-interpolant is provided. In \cite{ASS2}, the authors propose and analyse a collocation method and a modified Kulkarni's scheme based on spline quasi-interpolating operators, which are not projectors. In \cite{DRS} quadratic and cubic quasi-interpolating projectors are proposed and analysed in Galerkin, Kantorovich, Sloan and Kulkarni schemes. Finally, in \cite{ASS,DR}, quasi-interpolating operators have been presented for the numerical solution of 2D and surface integral equations, respectively.

Here is an outline of the paper. In Section \ref{pro} we introduce the spline QIPs, presenting their convergence properties, and we propose our spline quasi-interpolating projection methods for solving (\ref{e}), studying their convergence order and analysing the implementation details. Finally, in Section \ref{num} we provide some numerical results, illustrating the approximation properties of the proposed methods. Moreover, we compare the theoretical and numerical results with those obtained by using a classical collocation method based on the same spline quasi-interpolating projectors.

\section{Spline quasi-interpolating projection \\ methods}\label{pro}

Before presenting the spline QIPs and the methods for solving (\ref{e}), we introduce some assumptions and definitions. 

Let $\varphi$ be the unique solution of (\ref{e}) and let $a$ and $b$ be two real numbers such that 
$$
[\min_{s\in[0,1]}\varphi(s), \max_{s\in[0,1]}\varphi(s)] \subset (a,b).
$$
Define
$$
\Omega=[0,1]\times[0,1]\times[a,b]. 
$$
Let $\alpha \geq 1$. We assume that $k\in C^{\alpha}(\Omega)$, $\frac{\partial k}{\partial x} \in C^{2\alpha}(\Omega)$,  $f\in C^{\alpha}[0,1]$. Therefore, $K$ is a compact operator from $C[0,1]$ to $C^{\alpha}[0,1]$ and $\varphi \in C^{\alpha}[0,1]$.

The operator $K$ is Fr\'echet differentiable and the Fr\'echet derivative is given by
$$
(K'(x)h)(s)=\int_{0}^{1}\frac{\partial k}{\partial u}(s,t,x(t))h(t)dt. 
$$
For $\delta_{0}>0$, let
$$
B(\varphi,\delta_{0})=\{\psi\in X: \|\varphi-\psi\|_{\infty}<\delta_{0}\}.
$$
Since by assumption $\frac{\partial k}{\partial x}\in C^{2\alpha}(\Omega)$, it follows that $K'$ is Lipschitz continuous in a neighborhood $B(\varphi,\delta_{0})$ of $\varphi$, that means, there exists a constant $\gamma$ such that
$$
\left\|K'(\varphi)-K'(x)\right\| \leq \gamma \left\|\varphi-x\right\|_{\infty}, \qquad    x\in B(\varphi,\delta_0).
$$
The operator $K'(\varphi)$ is compact and we assume that 1 is not an eigenvalue of $K'(\varphi)$.

\subsection{Spline quasi-interpolating projectors}\label{pro2}

Let $X_n=\mathcal{S}^{d-1}_d(I,\mathcal{T}_n)$ be the space of splines of degree $d$ and class $C^{d-1}$ on the uniform knot sequence $\mathcal{T}_n:=\{t_i=ih, 0\le i\le n\}$, with $h=1/n$.

We set $s_{i} := \frac{1}{2}(t_{i-1}+t_{i}),$ for $1\leq i \leq n$ and define the set of quasi-interpolation nodes $\Xi_n=\{\xi_{i}\}_{i=0}^{2n}$ with $\xi_{2i} :=t_{i},$ for $0 \leq i \leq n$ and $\xi_{2i-1} :=s_{i},$ for $1 \leq i \leq n$ and let \textbf{$x_{i}=x(\xi_{i})$}, for $0\leq i \leq 2n$. Let $\pi_n$ be a bounded QIP on $\mathcal{S}^{d-1}_d(I,\mathcal{T}_n)$, i.e. for $x\in C[0,1]$, we can write $\pi_n x$ as
\begin{equation}\label{propn}
	\pi_n x=\sum_{i=1}^{N} \lambda_i(x) B_i,
\end{equation}
where $N=$dim$(\mathcal{S}^{d-1}_d(I,\mathcal{T}_n))=n+d$, the $B_i$'s are the B-splines with support  $[ t_{i-d-1},t_{i}]$, on the usual extended knot sequence $\mathcal{T}_{n}^e=\mathcal{T}_{n} \cup \{t_{-d} = \ldots = t_{0} = 0; 1= t_{n} = \ldots = t_{n+d}\}$, and are a basis for $\mathcal{S}^{d-1}_d(I,\mathcal{T}_n)$, and the coefficients $\lambda_i(x)$ are local functionals having the following form
\begin{equation}\label{def_l}
	\lambda_i(x)=\sum_{j=1}^{F_i} \sigma_{i,j} x_j,
\end{equation}
using discrete values of $x$ in supp$(B_i)$ and the $\sigma_{i,j}$'s are chosen such that $\pi_n x=x$, $\forall x \in \mathcal{S}^{d-1}_d(I,\mathcal{T}_n)$. 

We can refer to \cite{LS} for a general theory of quasi-interpolating operators and examples of QIPs can be found in \cite{dB,DRS}. 

Since the operators $\pi_{n}$ are projectors with a norm which is uniformly bounded, classical results in approximation theory provide 
$$ 
\|x-\pi_{n}x\|_{\infty} \leq C\, {\rm dist}(x, \mathcal{S}^{d-1}_d(I,\mathcal{T}_n)),
$$
where $C=1+\left\|\pi_{n}\right\|_\infty$.

Therefore, using the Jackson type theorem for splines \cite{dB}, we can conclude that there exist constants $\overline{C}_{j}$, depending on $C$ and $j$, such that for all $x\in C^{j}[0,1]$,
$$
\left\|x-\pi_n x\right\|_\infty \leq \overline{C}_j h^j \omega(x^{(j)},h), \quad {\rm with} \quad 0\le j\le d,
$$
where $\omega$ is the modulus of continuity of $x^{(j)}$. In particular for $j=d$ and when $x$ has the derivative of order $d+1$ continuous, we obtain 
\begin{equation}\label{mag}
\|x-\pi_n x\|_{\infty}= O(h^{d+1}).
\end{equation}
The spline projectors of even degree $d$ have the particularly interesting property shown in the following proposition, where we assume QIPs with coefficient functionals $\lambda_i(x)$, $i=d+1, \ldots, n$, such that the values $\sigma_{i,j}$, in (\ref{def_l}), associated with quasi-interpolation nodes symmetric with respect to the center of the support of $B_i$ are equal. Moreover, the coefficient functionals $\lambda_i(x)$, $i=1, \ldots, d$ have symmetry properties with respect to the functionals $\lambda_i(x)$, $i=n+1, \ldots, n+d$, analogous to the symmetry properties of the corresponding B-splines. Both requirements are common choices in the literature for the QIP construction.

\begin{proposition}\label{prop2} 
When $d$ is even, for any function $g\in W^{1,1}$ (i.e. with $\Vert g'\Vert_1$ bounded), if $\left\|x^{(d+2)}\right\|_\infty$ is bounded, then
\begin{equation}\label{mm}
\left|\int_0^1 g(t)(\pi_n x(t)-x(t))dt\right|=O(h^{d+2}).
\end{equation}
\end{proposition}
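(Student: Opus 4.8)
The plan is to run a superconvergence argument that combines the locality of the projector $\pi_n$ with the symmetry of its coefficient functionals; for even $d$ the extra power of $h$ will come from a parity cancellation on each interior knot interval. First I would replace $g$ by the piecewise constant function $\bar g$ with $\bar g\equiv g(s_i)$ on $[t_{i-1},t_i]$. Since $g\in W^{1,1}$ is absolutely continuous, $|g(t)-g(s_i)|\le\int_{t_{i-1}}^{t_i}|g'(\tau)|\,d\tau$ for $t\in[t_{i-1},t_i]$, so $\|g-\bar g\|_1\le h\|g'\|_1$; combined with $\|\pi_n x-x\|_\infty=O(h^{d+1})$ from (\ref{mag}) — applicable because $\|x^{(d+2)}\|_\infty<\infty$ forces $x\in C^{d+1}[0,1]$ — this yields
$$\left|\int_0^1\bigl(g(t)-\bar g(t)\bigr)\bigl(\pi_n x(t)-x(t)\bigr)\,dt\right|\le\|\pi_n x-x\|_\infty\,\|g-\bar g\|_1=O(h^{d+2}).$$
It therefore suffices to estimate $\sum_{i=1}^n g(s_i)\int_{t_{i-1}}^{t_i}\bigl(\pi_n x(t)-x(t)\bigr)\,dt$, and the crucial ingredient is the interval-wise bound $\int_{t_{i-1}}^{t_i}(\pi_n x-x)(t)\,dt=O(h^{d+3})$, uniformly over the interior indices $d+1\le i\le n-d$, the $2d$ remaining (boundary) intervals being handled by the crude estimate $O(h^{d+2})$. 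Granting this, the interior part is bounded by $O(h^{d+3})\sum_i|g(s_i)|\le O(h^{d+3})\cdot n\|g\|_\infty=O(h^{d+2})$ and the boundary part by $2d\cdot\|g\|_\infty\cdot O(h^{d+2})=O(h^{d+2})$, which together with the display gives (\ref{mm}).

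To prove the interval-wise bound, fix an interior index $i$ and let $c_i=s_i$ be the midpoint of $[t_{i-1},t_i]$. Through (\ref{propn})--(\ref{def_l}), the restriction of $\pi_n x$ to $[t_{i-1},t_i]$ depends only on the values of $x$ at the nodes $\xi_j$ lying in $[t_{i-d-1},t_{i+d}]$, since only $B_i,\dots,B_{i+d}$ are supported there. Write $x=T+R$, where $T$ is the Taylor polynomial of $x$ of degree $d+1$ at $c_i$; then $|R(t)|\le C\,\|x^{(d+2)}\|_\infty\,h^{d+2}$ on that window, so by linearity and uniform boundedness of $\pi_n$ one gets $\|(\pi_n R-R)|_{[t_{i-1},t_i]}\|_\infty=O(h^{d+2})$ and hence $\int_{t_{i-1}}^{t_i}(\pi_n R-R)=O(h^{d+3})$. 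Since $\pi_n$ reproduces $\Pi_d$ (polynomials of degree at most $d$), writing $T=p+\frac{x^{(d+1)}(c_i)}{(d+1)!}\,q$ with $p\in\Pi_d$ and $q(t)=(t-c_i)^{d+1}$ gives $(\pi_n T-T)|_{[t_{i-1},t_i]}=\frac{x^{(d+1)}(c_i)}{(d+1)!}\,(\pi_n q-q)|_{[t_{i-1},t_i]}$, so everything reduces to the identity $\int_{t_{i-1}}^{t_i}(\pi_n q-q)(t)\,dt=0$.

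This identity is where evenness of $d$ enters: $d+1$ is odd, so $q$ is odd with respect to $c_i$. For an interior index the nodes $\xi_j$ in $[t_{i-d-1},t_{i+d}]$ are symmetric about $c_i$, the centres of the supports of $B_i,\dots,B_{i+d}$ are symmetric about $c_i$, the uniform B-splines are mirror symmetric, and, by the assumed structure of the interior coefficient functionals (symmetry of the weights $\sigma_{j,\cdot}$ about each support centre, together with their uniformity), the restriction of $\pi_n$ to $[t_{i-1},t_i]$ commutes with the reflection $t\mapsto 2c_i-t$. Hence $\pi_n q-q$ is odd about $c_i$ on $[t_{i-1},t_i]$ and its integral over this symmetric interval vanishes. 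The one genuinely delicate step is precisely this last point — that the interior quasi-interpolation error inherits the parity of $q$, that is, that the local map commutes with the reflection about $c_i$; once it is established, the remainder is a routine accounting of powers of $h$.
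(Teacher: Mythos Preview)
Your argument is correct and follows the same interior/boundary split as the paper, handling the $2d$ boundary intervals by the crude $O(h^{d+1})$ estimate exactly as in $(\diamondsuit)+(\triangle)$. For the interior contribution, the paper merely cites \cite[Lemma~4.3]{DRS} (the quadratic case) and asserts that ``the logical scheme'' there generalizes; you instead give a self-contained proof via the replacement of $g$ by the piecewise-constant $\bar g$ (this is where your argument explicitly uses $g\in W^{1,1}$), followed by a Taylor expansion at each midpoint $c_i$ and the parity computation $\int_{t_{i-1}}^{t_i}(\pi_n q-q)\,dt=0$ for $q(t)=(t-c_i)^{d+1}$. This has the merit of making transparent exactly where the evenness of $d$ and the symmetry hypotheses on the coefficient functionals enter. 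One point to watch: your reflection-commutation step needs, in addition to the stated symmetry of each $\lambda_j$ about its own support centre, the translation invariance of the interior functionals along the uniform partition (what you call ``their uniformity''); this is standard for QIPs on uniform knots but is not spelled out separately in the paper's hypotheses.
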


\begin{proof} We have that 
\begin{equation}\label{m1}
\begin{array}{lll}
\left|\displaystyle\int_0^1 g(t)(\pi_n x(t)-x(t))dt\right| & = &\left|\displaystyle\sum_{j=1}^n \int_{t_{j-1}}^{t_j} g(t)(\pi_n x(t)-x(t))dt\right| \\
& \leq & \displaystyle\sum_{j=1}^n \int_{t_{j-1}}^{t_j} \left|g(t)\right| \left|\pi_n x(t)-x(t) \right|dt\\
& = &\displaystyle\underbrace{\sum_{j=d+1}^{n-d} \int_{t_{j-1}}^{t_j} \left|g(t)\right| \left|\pi_n x(t)-x(t) \right|dt}_{(\circ)} \\
& & + \underbrace{\sum_{j=1}^{d} \int_{t_{j-1}}^{t_j} \left|g(t)\right| \left|\pi_n x(t)-x(t) \right|dt}_{(\diamondsuit)} \\
& & +\underbrace{\sum_{j=n-d+1}^{n} \int_{t_{j-1}}^{t_j} \left|g(t)\right| \left|\pi_n x(t)-x(t) \right|dt}_{(\triangle)}\\
\end{array}
\end{equation}
In order to bound the term $(\circ)$, we can generalize the logical scheme used in \cite[Lemma 4.3]{DRS} for a quadratic QIP, obtaining 
\begin{equation}\label{m2}
(\circ) = O(h^{d+2}).
\end{equation}
Instead, for $(\diamondsuit)$ and $(\triangle)$, taking into account (\ref{mag}), we have
\begin{equation}\label{m3}
(\diamondsuit) + (\triangle) \leq 2dh\left\|g\right\|_\infty \left\|\pi_n x-x\right\|_\infty=O(h^{d+2}).
\end{equation}
Therefore, from (\ref{m1}), (\ref{m2}) and (\ref{m3}), we obtain (\ref{mm}). \end{proof}

\subsection{Description of the methods}\label{meth}

Given the QIP operator $\pi_n: C[0,1] \rightarrow \mathcal{S}^{d-1}_d(I,\mathcal{T}_n)$, as presented in Section \ref{pro2}, the classical collocation method consists in approximating (\ref{e}) by
\begin{equation}\label{col2}
\varphi_{n}^{C}-\pi_{n}K(\varphi_{n}^{C})=\pi_{n}f.
\end{equation}
Instead, in order to obtain a projection method with high order of convergence, we apply the logical scheme proposed in \cite{GKV} and the approximate solution $\varphi_{n}^{H}$ is obtained by solving 
\begin{equation}\label{kul2}
\varphi_{n}^{H}-K_{n}^{H}(\varphi_{n}^{H})=f,
\end{equation}
where 
\begin{equation}\label{expre}
K_{n}^{H}(x)=\pi_{n}K(x)+K(\pi_{n}x)-\pi_{n}K(\pi_{n}x).
\end{equation}
The following theorem states an important result about the local existence and uniqueness of the solution $\varphi_{n}^{H}$ of (\ref{kul2}). The proof is omitted because our QIPs satisfy the hypothesis of the operators considered in \cite{G}, where a general proof is given.

\begin{theorem}\label{thmG} 
Suppose that $\varphi$ is the unique solution of (\ref{e}) with $f=0$ and that 1 is not an eigenvalue of $K'(\varphi).$ Then, there exists a neighborhood $B(\varphi,\delta_{0})$ of $\varphi$ which contains, for all $n$ large enough, a unique solution $\varphi_{n}^{H}$ of (\ref{kul2}). In addition,
$$
\frac{2}{3}\alpha_{n}\leq\|\varphi_{n}^{H}-\varphi\|_{\infty}\leq 2\alpha_{n}, 
$$
where $\alpha_{n}=\|(I-(K_{n}^{H})'(\varphi))^{-1}(K(\varphi)-K_{n}^{H}(\varphi))\|$ is a sequence converging to zero. Also,
$$
\frac{\alpha_{n}}{\|\pi_{n}\varphi-\varphi\|}_{\infty}\to 0,  \qquad n\to \infty.
$$
\end{theorem}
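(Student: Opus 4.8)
The plan is to recast (\ref{kul2}), in which $f=0$, as a fixed-point problem for the Newton-type map
\[
\Phi_n(x)=x-\bigl(I-(K_n^H)'(\varphi)\bigr)^{-1}\bigl(x-K_n^H(x)\bigr)
\]
on a small ball around $\varphi$, and to run a quantitative Banach fixed-point argument. First I would record that $K_n^H$ is Fr\'echet differentiable with $(K_n^H)'(x)h=\pi_n K'(x)h+K'(\pi_n x)\pi_n h-\pi_n K'(\pi_n x)\pi_n h$, and that, after shrinking $\delta_0$ so that $\pi_n$ maps the working ball into $B(\varphi,\delta_0)$ for $n$ large (possible since $\|\pi_n\varphi-\varphi\|_\infty\to 0$ and $\sup_n\|\pi_n\|_\infty<\infty$), the Lipschitz continuity of $K'$ together with the uniform bound on $\|\pi_n\|_\infty$ gives a Lipschitz constant $L$ for $(K_n^H)'$ that is independent of $n$.

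The first substantive step is to prove that $I-(K_n^H)'(\varphi)$ is boundedly invertible for $n$ large, with $\|(I-(K_n^H)'(\varphi))^{-1}\|$ bounded uniformly in $n$. Writing
\[
(K_n^H)'(\varphi)-K'(\varphi)=(\pi_n-I)K'(\varphi)+(I-\pi_n)K'(\varphi)\pi_n+(I-\pi_n)\bigl(K'(\pi_n\varphi)-K'(\varphi)\bigr)\pi_n,
\]
I would combine the compactness of $K'(\varphi)$ with $\pi_n\to I$ pointwise (which follows from the convergence properties recalled in Section~\ref{pro2} and gives $\|(I-\pi_n)K'(\varphi)\|\to 0$), the uniform bound on $\|\pi_n\|_\infty$, and the Lipschitz estimate $\|K'(\pi_n\varphi)-K'(\varphi)\|\le\gamma\|\pi_n\varphi-\varphi\|_\infty\to 0$ to conclude $\|(K_n^H)'(\varphi)-K'(\varphi)\|\to 0$. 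Since $1$ is not an eigenvalue of the compact operator $K'(\varphi)$, the Fredholm alternative makes $I-K'(\varphi)$ bijective, and a Neumann-series perturbation argument then yields the claimed uniform invertibility.

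Next I would estimate the residual $r_n:=\varphi-K_n^H(\varphi)=K(\varphi)-K_n^H(\varphi)$. A short computation gives $r_n=(I-\pi_n)\bigl(K(\varphi)-K(\pi_n\varphi)\bigr)$, and a first-order Taylor expansion of $K$ at $\varphi$ with Lipschitz remainder yields
\[
r_n=(I-\pi_n)K'(\varphi)(\varphi-\pi_n\varphi)+(I-\pi_n)\rho_n,\qquad \|\rho_n\|_\infty\le\tfrac{\gamma}{2}\|\varphi-\pi_n\varphi\|_\infty^2 .
\]
Bounding the first term by $\|(I-\pi_n)K'(\varphi)\|\,\|\varphi-\pi_n\varphi\|_\infty$ (again $o(\|\varphi-\pi_n\varphi\|_\infty)$ by compactness) and the second by $\|I-\pi_n\|\,O(\|\varphi-\pi_n\varphi\|_\infty^2)$ shows $\|r_n\|_\infty=o(\|\pi_n\varphi-\varphi\|_\infty)$. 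Multiplying by the uniformly bounded inverse gives $\alpha_n=o(\|\pi_n\varphi-\varphi\|_\infty)$, which proves both $\alpha_n\to 0$ and the final limit in the statement.

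Finally I would derive existence, uniqueness and the two-sided bound from the contraction. Since $\Phi_n'(\varphi)=0$ and $\|\Phi_n'(x)\|\le\|(I-(K_n^H)'(\varphi))^{-1}\|\,L\,\|x-\varphi\|_\infty$ on the working ball, after a further uniform shrinking of $\delta_0$ one has $\|\Phi_n'(x)\|\le\tfrac12$ there, while $\|\Phi_n(\varphi)-\varphi\|_\infty=\alpha_n\le\delta_0/2$ for $n$ large; hence $\Phi_n$ maps $B(\varphi,\delta_0)$ into a closed sub-ball and is a $\tfrac12$-contraction, so it has a unique fixed point $\varphi_n^H$ there, which is the unique solution of (\ref{kul2}) in $B(\varphi,\delta_0)$. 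The upper bound will be $\|\varphi_n^H-\varphi\|_\infty\le 2\|\Phi_n(\varphi)-\varphi\|_\infty=2\alpha_n$, and the reverse triangle inequality $\|\varphi_n^H-\varphi\|_\infty\ge\|\Phi_n(\varphi)-\varphi\|_\infty-\|\Phi_n(\varphi_n^H)-\Phi_n(\varphi)\|_\infty\ge\alpha_n-\tfrac12\|\varphi_n^H-\varphi\|_\infty$ gives the lower bound $\tfrac23\alpha_n$. I expect the main obstacle to be the two norm-convergence facts $\|(K_n^H)'(\varphi)-K'(\varphi)\|\to 0$ and $\|r_n\|_\infty=o(\|\pi_n\varphi-\varphi\|_\infty)$: both hinge on exploiting the factored form $(I-\pi_n)K'(\varphi)$ with $K'(\varphi)$ compact, which is precisely where the modified scheme (\ref{expre}) gains over plain collocation (\ref{col2}); the remainder is a routine quantitative Banach fixed-point computation.
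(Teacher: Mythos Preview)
Your argument is correct. The paper itself omits the proof entirely, observing only that the spline QIPs $\pi_n$ satisfy the hypotheses of the abstract framework in \cite{G} and deferring to the general proof given there; your Newton--Kantorovich contraction argument is exactly the standard scheme that underlies that reference, and in fact the paper's own proof of Theorem~\ref{thmordbest} reuses precisely your decomposition of $K'(\varphi)-(K_n^H)'(\varphi)$ and the same compactness/Lipschitz ingredients to obtain the uniform invertibility of $I-(K_n^H)'(\varphi)$.
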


\subsection{Orders of convergence}\label{oc}

Now we study the order of convergence of the proposed methods (\ref{col2}) and (\ref{kul2}). 

We consider a lemma and a theorem which are generalizations of those given in \cite{GKV}. We do not report their proofs, because the QIPs $\pi_n$ here considered satisfy the properties required in \cite{GKV}. Moreover, we propose a theorem that shows a superconvergence phenomenon in the case $d$ even, when the kernel of $K$ is sufficiently smooth.

\begin{lemma}\label{po}
For $\alpha \geq 1$, let $k \in C^{\alpha}(\Omega), \frac{\partial k}{\partial u}\in C^{2\alpha}(\Omega)$ and $f\in C^{\alpha}[0,1]$. Let $\pi_n: C[0,1] \rightarrow \mathcal{S}^{d-1}_d(I,\mathcal{T}_n)$ be a spline QIP operator of kind (\ref{propn}). Then
$$
\left\|(I-\pi_{n})\left[K(\pi_{n}\varphi)-K(\varphi)-K'(\varphi)(\pi_{n}\varphi-\varphi)\right]\right\|_{\infty}=O(h^{3\beta}),
$$
with $\beta=\min\{\alpha,d+1\}$.
\end{lemma}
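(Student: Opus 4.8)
The plan is to localise the estimate to subintervals and exploit a Taylor expansion of the kernel in its third argument around $\varphi(t)$, together with the approximation properties of $\pi_n$ recorded in the excerpt. First I would write, for fixed $s$,
$$
\left[K(\pi_n\varphi)-K(\varphi)-K'(\varphi)(\pi_n\varphi-\varphi)\right](s)=\int_0^1\!\Big(k(s,t,\pi_n\varphi(t))-k(s,t,\varphi(t))-\tfrac{\partial k}{\partial u}(s,t,\varphi(t))(\pi_n\varphi(t)-\varphi(t))\Big)\,dt,
$$
and by Taylor's theorem with integral remainder this integrand equals $\tfrac12 R(s,t)\,(\pi_n\varphi(t)-\varphi(t))^2$, where $R(s,t)=\int_0^1 2(1-\tau)\tfrac{\partial^2 k}{\partial u^2}\big(s,t,\varphi(t)+\tau(\pi_n\varphi(t)-\varphi(t))\big)\,d\tau$ is, by the smoothness hypothesis $\tfrac{\partial k}{\partial u}\in C^{2\alpha}(\Omega)$ (with $2\alpha\ge 2$) and since $\|\pi_n\varphi-\varphi\|_\infty\to 0$, bounded uniformly in $n$ together with enough of its $t$-derivatives. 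Hence $K(\pi_n\varphi)-K(\varphi)-K'(\varphi)(\pi_n\varphi-\varphi)$ is the integral operator $t\mapsto \tfrac12 R(s,t)e_n(t)^2$ applied to the constant $1$, where $e_n:=\pi_n\varphi-\varphi$.

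Next I would apply $(I-\pi_n)$ and estimate its $\infty$-norm. Using $\|(I-\pi_n)g\|_\infty\le C\,\mathrm{dist}(g,\mathcal S^{d-1}_d)$ and the Jackson estimate from the excerpt, it suffices to bound suitable derivatives (up to order $\beta=\min\{\alpha,d+1\}$) of the function $s\mapsto \int_0^1 R(s,t)e_n(t)^2\,dt$. Differentiating under the integral sign moves $s$-derivatives onto $R$, which are controlled by $k\in C^\alpha(\Omega)$, while the factor $e_n(t)^2$ contributes $\|e_n\|_\infty^2=O(h^{2(d+1)})$ pointwise. The remaining gap to $h^{3\beta}$ comes from the smoothing effect of $(I-\pi_n)$: reproducing the argument of \cite{GKV}, one writes $e_n^2$ on each knot interval in terms of the local polynomial pieces, integrates against the (smooth in $s$) kernel $R$, and gains an extra factor $h^\beta$ from the approximation order of $\pi_n$ acting on the resulting $s$-dependent function, using that $e_n$ itself is $O(h^{d+1})$ and hence $e_n^2$ is $O(h^{2(d+1)})$ with $2(d+1)\ge 3\beta$ is NOT what is needed — rather the three factors of $h^\beta$ arise as $h^\beta$ from $e_n$, another $h^\beta$ from a second cancellation, and a final $h^\beta$ from the outer $(I-\pi_n)$, exactly as in the corresponding lemma of \cite{GKV}.

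The main obstacle is the bookkeeping of the three separate powers of $h$: one must track that $\pi_n\varphi-\varphi$ contributes $h^\beta$ twice (once from each factor in the quadratic remainder, using $\beta\le d+1$) and that the outer projection error $(I-\pi_n)$ applied to the resulting smooth-in-$s$ kernel contributes the third $h^\beta$, and to make this rigorous one has to verify that the $s$-regularity needed for the outer Jackson estimate (namely $C^\beta$ in $s$) is genuinely available, which follows from $k\in C^\alpha(\Omega)$ and $\beta\le\alpha$. Since \cite{GKV} carries out precisely this scheme for piecewise polynomials and the QIPs here satisfy the same stability and approximation properties (uniform boundedness, reproduction of $\mathcal S^{d-1}_d$, Jackson estimate), the argument transfers verbatim, and no new idea beyond the local Taylor expansion and the approximation-theoretic estimates already quoted is required.
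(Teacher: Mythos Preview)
The paper does not give its own proof of this lemma: it explicitly omits the argument on the grounds that the spline QIPs satisfy the hypotheses of the corresponding lemma in \cite{GKV}, and simply refers the reader there. Your proposal---Taylor remainder in the third argument producing the factor $e_n^2$ with $e_n=\pi_n\varphi-\varphi=O(h^\beta)$, together with one further $h^\beta$ from the outer $(I-\pi_n)$ acting on a function that is $C^\beta$ in $s$---is exactly the \cite{GKV} scheme you invoke, so you and the paper are aligned; the self-correction in your middle paragraph lands on the right decomposition (two powers from $e_n^2$, one from the outer Jackson estimate).
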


\begin{theorem}\label{thmord} 
For $\alpha \geq 1$, let $k \in C^{\alpha}(\Omega), \frac{\partial k}{\partial u}\in C^{2\alpha}(\Omega)$ and $f\in C^{\alpha}[0,1]$. Let $\varphi$ be the unique solution of (\ref{e}) and assume that 1 is not an eigenvalue of $K'(\varphi)$. Let $\pi_n: C[0,1] \rightarrow \mathcal{S}^{d-1}_d(I,\mathcal{T}_n)$ be a spline QIP operator of kind (\ref{propn}). Let $\varphi_{n}^{H}$ be the unique solution of (\ref{kul2}). Then
$$
\| \varphi_{n}^{H}-\varphi\|_{\infty}=O(h^{2\beta}),
$$
with $\beta=\min\{\alpha,d+1\}$.
\end{theorem}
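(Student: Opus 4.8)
The plan is to reduce the estimate to a bound on the sequence $\alpha_n$ of Theorem~\ref{thmG}, and then to control $\alpha_n$ by a Fr\'echet expansion of $K(\varphi)-K_n^H(\varphi)$ about $\varphi$. By Theorem~\ref{thmG} (general $f$ being absorbed into $K$ as usual), for $n$ large enough the solution $\varphi_n^H$ of (\ref{kul2}) exists, is unique in a fixed ball $B(\varphi,\delta_0)$, and satisfies $\|\varphi_n^H-\varphi\|_\infty\le 2\alpha_n$, where $\alpha_n=\|(I-(K_n^H)'(\varphi))^{-1}(K(\varphi)-K_n^H(\varphi))\|$. Since $(K_n^H)'(\varphi)=\pi_nK'(\varphi)+K'(\pi_n\varphi)\pi_n-\pi_nK'(\pi_n\varphi)\pi_n$ converges to the compact operator $K'(\varphi)$ in the sense required by the perturbation argument of \cite{GKV,G}, and $I-K'(\varphi)$ is invertible by hypothesis, the operators $(I-(K_n^H)'(\varphi))^{-1}$ exist and are uniformly bounded for $n$ large, say by a constant $M$. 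Hence $\alpha_n\le M\,\|K(\varphi)-K_n^H(\varphi)\|_\infty$, and it suffices to prove $\|K(\varphi)-K_n^H(\varphi)\|_\infty=O(h^{2\beta})$.

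To this end I would use (\ref{expre}) to write
\[
K(\varphi)-K_n^H(\varphi)=(I-\pi_n)K(\varphi)-(I-\pi_n)K(\pi_n\varphi)=-(I-\pi_n)\bigl[K(\pi_n\varphi)-K(\varphi)\bigr],
\]
and then insert the Taylor expansion $K(\pi_n\varphi)-K(\varphi)=K'(\varphi)(\pi_n\varphi-\varphi)+R_n$, with $R_n:=K(\pi_n\varphi)-K(\varphi)-K'(\varphi)(\pi_n\varphi-\varphi)$, so that
\[
K(\varphi)-K_n^H(\varphi)=-(I-\pi_n)K'(\varphi)(\pi_n\varphi-\varphi)-(I-\pi_n)R_n.
\]
The second term is precisely the quantity bounded in Lemma~\ref{po}, so it is $O(h^{3\beta})=O(h^{2\beta})$. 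For the first term, put $w:=K'(\varphi)(\pi_n\varphi-\varphi)$, i.e. $w(s)=\int_0^1\frac{\partial k}{\partial u}(s,t,\varphi(t))(\pi_n\varphi-\varphi)(t)\,dt$. Because $\frac{\partial k}{\partial u}\in C^{2\alpha}(\Omega)$ one may differentiate under the integral sign and, using $\|\pi_n\varphi-\varphi\|_\infty=O(h^{\beta})$, deduce $\|w^{(j)}\|_\infty=O(h^{\beta})$ for every $j\le\min\{d+1,2\alpha\}$; in particular $w$ is smooth enough (in $s$) to be fed into the Jackson-type spline estimate quoted before (\ref{mag}). A short computation, distinguishing $\beta=\alpha$ from $\beta=d+1$ and, in the borderline case $\alpha\ge d+1$, using the sharp bound $\omega(w^{(d)},h)\le\|w^{(d+1)}\|_\infty h$, then gives $\|(I-\pi_n)w\|_\infty=O(h^{2\beta})$. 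Combining the two contributions yields $\|K(\varphi)-K_n^H(\varphi)\|_\infty=O(h^{2\beta})$, hence $\|\varphi_n^H-\varphi\|_\infty=O(h^{2\beta})$.

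The main obstacle is the linear term $(I-\pi_n)K'(\varphi)(\pi_n\varphi-\varphi)$: it cannot simply be bounded by $\|I-\pi_n\|$ times $O(h^{\beta})$, and the gain of a second factor $h^{\beta}$ relies on the smoothing property of $K'(\varphi)$, namely that the $O(h^{\beta})$ function $\pi_n\varphi-\varphi$ is mapped to a function whose $s$-derivatives are again $O(h^{\beta})$ up to order $\min\{d+1,2\alpha\}$. This is exactly where the hypothesis $\frac{\partial k}{\partial u}\in C^{2\alpha}(\Omega)$ (rather than merely $C^{\alpha}$) enters, and where the doubling of the convergence exponent in $2\beta$ originates. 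The uniform boundedness of $(I-(K_n^H)'(\varphi))^{-1}$ is the remaining ingredient, but it is routine perturbation theory already contained in the framework of \cite{GKV,G}.
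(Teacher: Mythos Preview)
Your argument is correct and is essentially the proof given in \cite{GKV}, which the paper simply cites without reproducing (``We do not report their proofs, because the QIPs $\pi_n$ here considered satisfy the properties required in \cite{GKV}''). In particular, your decomposition of $K(\varphi)-K_n^H(\varphi)$ into the linear piece $(I-\pi_n)K'(\varphi)(\pi_n\varphi-\varphi)$ and the Taylor remainder $(I-\pi_n)R_n$, together with the use of Lemma~\ref{po} for the latter and the smoothing-plus-Jackson argument for the former, is exactly the structure the paper itself employs when it \emph{does} give a proof in the closely related Theorem~\ref{thmordbest} (the terms $(\Box)$ and $(\Diamond)$ there).
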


If the kernel of $K$ is sufficiently smooth, that is, $\alpha \geq d+1$, we have
$$
\beta=d+1
$$
and hence, if we consider the collocation method (\ref{col2}), as we expect, from (\ref{mag}), we have the following order of convergence:
\begin{equation}\label{estg}
\|\varphi_{n}^{C}-\varphi\|_{\infty}=O(h^{d+1}).
\end{equation}
Concerning the method with high order of convergence (\ref{kul2}), the following result holds.

\begin{theorem}\label{thmordbest} 
For $\alpha \geq d+1$, let $k \in C^{\alpha}(\Omega), \frac{\partial k}{\partial u}\in C^{2\alpha}(\Omega)$ and $f\in C^{\alpha}[0,1]$. Let $\varphi$ be the unique solution of (\ref{e}) and assume that 1 is not an eigenvalue of $K'(\varphi)$. Let $\pi_n: C[0,1] \rightarrow \mathcal{S}^{d-1}_d(I,\mathcal{T}_n)$ be a spline QIP operator of kind (\ref{propn}). Let $\varphi_{n}^{H}$ be the unique solution of (\ref{kul2}). Then
$$
\| \varphi_{n}^{H}-\varphi\|_{\infty}= \left\{\begin{array}{ll}
        O(h^{2d+2}), & \mbox{if $d$ is odd} \\
        O(h^{2d+3}), & \mbox{if $d$ is even and $\varphi$ satisfies the hypothesis of Proposition \ref{prop2}} 
        \end{array}\right. .
$$
\end{theorem}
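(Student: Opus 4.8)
The plan is to reduce the error $\|\varphi_n^H-\varphi\|_\infty$ to the size of $K(\varphi)-K_n^H(\varphi)$ and then to estimate this quantity using the special structure of $K_n^H$ together with Lemma~\ref{po} and, for $d$ even, Proposition~\ref{prop2}. Following the lines of the proof of Theorem~\ref{thmord} (and of the corresponding result in \cite{GKV}), since $K'(\varphi)$ is compact, $1$ is not an eigenvalue of $K'(\varphi)$, and $(K_n^H)'(\varphi)$ converges to $K'(\varphi)$ in the collectively compact sense, the operators $I-(K_n^H)'(\varphi)$ are invertible with uniformly bounded inverses for $n$ large; hence $\|\varphi_n^H-\varphi\|_\infty\le C\,\|K(\varphi)-K_n^H(\varphi)\|_\infty$. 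From $K_n^H(\varphi)=\pi_nK(\varphi)+K(\pi_n\varphi)-\pi_nK(\pi_n\varphi)$ one obtains the identity $K(\varphi)-K_n^H(\varphi)=(I-\pi_n)\bigl[K(\varphi)-K(\pi_n\varphi)\bigr]$. Expanding $K(\pi_n\varphi)-K(\varphi)=K'(\varphi)(\pi_n\varphi-\varphi)+R_n$ with $R_n=K(\pi_n\varphi)-K(\varphi)-K'(\varphi)(\pi_n\varphi-\varphi)$, Lemma~\ref{po} gives $\|(I-\pi_n)R_n\|_\infty=O(h^{3(d+1)})$, which is of higher order than $h^{2d+3}$ and hence negligible. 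Therefore everything reduces to estimating $\|(I-\pi_n)z\|_\infty$ for the function
\[
z(s)=\bigl(K'(\varphi)(\pi_n\varphi-\varphi)\bigr)(s)=\int_0^1\frac{\partial k}{\partial u}(s,t,\varphi(t))\,(\pi_n\varphi(t)-\varphi(t))\,dt .
\]

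Since $\frac{\partial k}{\partial u}\in C^{2\alpha}(\Omega)$ with $\alpha\ge d+1$, the function $z$ is of class $C^{d+1}$ on $[0,1]$ and, differentiating under the integral sign,
\[
z^{(d+1)}(s)=\int_0^1\frac{\partial^{\,d+2}k}{\partial s^{\,d+1}\partial u}(s,t,\varphi(t))\,(\pi_n\varphi(t)-\varphi(t))\,dt .
\]
Applying the Jackson-type bound preceding (\ref{mag}) with $j=d$ (and using that $z\in C^{d+1}$, so $\omega(z^{(d)},h)\le \|z^{(d+1)}\|_\infty h$) we get $\|(I-\pi_n)z\|_\infty\le \overline C_d\,h^{d+1}\,\|z^{(d+1)}\|_\infty$. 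For general $d$, bounding the integral defining $z^{(d+1)}(s)$ crudely by $\|\partial_s^{d+1}\partial_u k\|_{\infty,\Omega}\,\|\pi_n\varphi-\varphi\|_\infty=O(h^{d+1})$ already yields $\|(I-\pi_n)z\|_\infty=O(h^{2d+2})$, which proves the estimate in the case $d$ odd (this case also follows directly from Theorem~\ref{thmord}).

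For $d$ even, with $\varphi$ satisfying the hypothesis of Proposition~\ref{prop2}, I would gain one further power of $h$ by applying Proposition~\ref{prop2} to the inner integral above: for each fixed $s$, set $g(t):=\frac{\partial^{\,d+2}k}{\partial s^{\,d+1}\partial u}(s,t,\varphi(t))$, which is $C^1$ in $t$ (hence in $W^{1,1}$), because $\partial_u k\in C^{2\alpha}$ with $2\alpha\ge 2d+2$ and $\varphi\in C^{d+1}$, and whose $W^{1,1}$-norm is bounded uniformly in $s$; then Proposition~\ref{prop2} applied with $x=\varphi$ gives $|z^{(d+1)}(s)|=O(h^{d+2})$ uniformly in $s$, so $\|z^{(d+1)}\|_\infty=O(h^{d+2})$. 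Consequently $\|(I-\pi_n)z\|_\infty\le \overline C_d\,h^{d+1}\,O(h^{d+2})=O(h^{2d+3})$, and combining with the negligible remainder term we conclude $\|\varphi_n^H-\varphi\|_\infty=O(h^{2d+3})$.

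The main obstacle is the even case: one must first recognize that the leading part of $K(\varphi)-K_n^H(\varphi)$ is the linear term $(I-\pi_n)K'(\varphi)(\pi_n\varphi-\varphi)$, and then see that the extra order is extracted by differentiating this function $d+1$ times in $s$ and applying, to the remaining integral of $\pi_n\varphi-\varphi$ against a $W^{1,1}$ weight, the cancellation encoded in Proposition~\ref{prop2} — the even-degree analogue of the classical one-extra-order phenomenon. The remaining work is routine: checking that the derivatives used exist and that the $O$-bounds are uniform in $s$, and invoking the standard perturbation argument for the uniform boundedness of $(I-(K_n^H)'(\varphi))^{-1}$.
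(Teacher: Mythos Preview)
Your proposal is correct and follows essentially the same route as the paper: the same identity $K(\varphi)-K_n^H(\varphi)=(I-\pi_n)[K(\varphi)-K(\pi_n\varphi)]$, the same splitting into the Taylor remainder handled by Lemma~\ref{po} and the linear part $(I-\pi_n)K'(\varphi)(\pi_n\varphi-\varphi)$, and the same use of the Jackson bound together with Proposition~\ref{prop2} applied to $g(t)=\partial_s^{d+1}\partial_u k(s,t,\varphi(t))$ to gain the extra power of $h$ when $d$ is even. The only cosmetic difference is that the paper proves the uniform bound on $(I-(K_n^H)'(\varphi))^{-1}$ via norm convergence $\|K'(\varphi)-(K_n^H)'(\varphi)\|\to 0$ rather than invoking collectively compact convergence, but this does not affect the argument.
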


\begin{proof} If $d$ is odd, the result is an immediate consequence of Theorem \ref{thmord} with $\beta=d+1$. 

Now we suppose $d$ even. We know that, by assumption, $I-K'(\varphi)$ is invertible. From (\ref{expre}), we have
$$
(K_{n}^{H})'(\varphi)=\pi_{n}K'(\varphi)+(I-\pi_{n})K'(\pi_{n}\varphi)\pi_{n}.
$$
Consequently,
$$
K'(\varphi)-(K_{n}^{H})'(\varphi)= (I-\pi_{n})K'(\varphi)(I-\pi_{n})+(I-\pi_{n})(K'(\varphi)-K'(\pi_{n}\varphi))\pi_{n}.
$$
Therefore,
$$
\|K'(\varphi)-(K_{n}^{H})'(\varphi)\| \leq \|(I-\pi_{n})K'(\varphi)(I-\pi_{n})\|+\|(I-\pi_{n})(K'(\varphi)-K'(\pi_{n}\varphi))\pi_{n}\|.
$$
Since $\pi_{n}$ converges to the identity operator pointwise on $C[0,1]$ and $K'(\varphi)$ is compact, it follows that $\|(I-\pi_{n})K'(\varphi)\|\to 0$, as $n\to \infty$.

Since $K'$ is Lipschitz continuous in a neighborhood $B(\varphi,\delta_{0})$ of $\varphi$, we get
$$
\|K'(\varphi)-K'(\pi_{n}\varphi)\| \leq \gamma\|\varphi-\pi_{n}\varphi\|_{\infty}\to 0, \quad \mbox{ as } n\to\infty.
$$
Thus, since the sequence $(\|\pi_{n}\|)$ is uniformly bounded, 
$$
\|K'(\varphi)-(K_{n}^{H})'(\varphi)\| \to 0 \quad \mbox{ as } n\to\infty.
$$
It follows that $I-(K_{n}^{H})'(\varphi)$ is invertible, for $n$ big enough, and  
$$
\|(I-(K_{n}^{H})'(\varphi))]^{-1}\| \leq 2\|(I-K'(\varphi))^{-1}\|.
$$
By using Theorem \ref{thmG}, we obtain
\begin{equation}\label{quatt}
\|\varphi_{n}^{H}-\varphi\|_{\infty}\leq 4\|(I-K'(\varphi))^{-1}\|\|(I-\pi_{n})(K(\varphi)-K(\pi_{n}\varphi))\|_{\infty}.
\end{equation}
Considering
$$
\begin{array}{ll}
(I-\pi_{n})(K(\varphi)-K(\pi_{n}(\varphi)))= & \underbrace{-(I-\pi_{n})[K(\pi_{n}\varphi)-K(\varphi)-K'(\varphi)(\pi_{n}\varphi-\varphi)]}_{(\Box)}\\
&   \underbrace{-(I-\pi_{n})K'(\varphi)(\pi_{n}\varphi-\varphi)}_{(\Diamond)},
\end{array}
$$
by Lemma \ref{po}, with $\beta=d+1$, we have
\begin{equation}\label{estim}
\|(\Box)\|_{\infty}= O(h^{3d+3}).
\end{equation}
Moreover, from the approximation properties of $\pi_n$ stated in Section \ref{pro2}, we get
\begin{equation}\label{dd}
\|(\Diamond)\|_{\infty} \leq C\|(K'(\varphi)(\pi_{n}\varphi-\varphi))^{(d+1)}\|_{\infty}h^{d+1}.
\end{equation}
Since $\frac{\partial k}{\partial u}\in C^{2\alpha}(\Omega)$, it follows that
$$
(K'(\varphi)(\pi_{n}\varphi-\varphi))^{(d+1)}(s)=\int_{0}^{1} \frac{\partial k^{d+2}}{\partial s^{d+1} \partial u}(s,t,\varphi(t))(\pi_{n}\varphi-\varphi)(t)dt,
$$
then, by using Proposition \ref{prop2} with $g(t)=\frac{\partial k^{d+2}}{\partial s^{d+1} \partial u}(s,t,\varphi(t))$, we get
\begin{equation}\label{dd1}
\|(K'(\varphi)(\pi_{n}\varphi-\varphi))^{(d+1)}\|_{\infty} = O(h^{d+2}).
\end{equation}
Thus, from (\ref{dd}) and (\ref{dd1})
\begin{equation}\label{estim1}
\|(\Diamond)\|_{\infty} = O(h^{2d+3}).
\end{equation}
So, from (\ref{quatt}), (\ref{estim}) and (\ref{estim1}), we conclude that
$$
\|\varphi_{n}^{H}-\varphi\|_{\infty}=O(h^{2d+3}).
$$\end{proof}

\subsection{Implementation details}\label{id}
\subsubsection{Spline projection method with high order of convergence}
From (\ref{kul2}) and (\ref{expre}), we have
$$
\varphi_{n}^{H}-\pi_{n}K(\varphi_{n}^{H})-K(\pi_{n}\varphi_{n}^{H})+\pi_{n}K(\pi_{n}\varphi_{n}^{H})=f.
$$
After some algebra, taking into account that $\pi_n$ is a projector, we obtain 
$$
\pi_{n}\varphi_{n}^{H}-\pi_{n}K(\varphi_{n}^{H})=\pi_{n}f,
$$
\begin{equation}\label{phi}
\varphi_{n}^{H}=\pi_{n}\varphi_{n}^{H}+(I-\pi_{n})(K(\pi_{n}\varphi_{n}^{H})+f)
\end{equation}
and
\begin{equation}\label{m}
\pi_{n}\varphi_{n}^{H}-\pi_{n}K(\pi_{n}\varphi_{n}^{H}+(I-\pi_{n})(K(\pi_{n}\varphi_{n}^{H})+f))=\pi_{n}f.
\end{equation}
Define
$$
\psi_{n}=\pi_{n}\varphi_{n}^{H}
$$
and
\begin{equation}\label{deff}
F_{n}(y)=y-\pi_{n}K(y+(I-\pi_{n})(K(y)+f))-\pi_{n}f,   \quad y\in \mathcal{S}^{d-1}_d(I,\mathcal{T}_n),
\end{equation}
whose Fr\'echet derivative is given by
\begin{equation}\label{derf}
(F_{n})'(y)h=h-\pi_{n}K'(y+(I-\pi_{n})(K(y)+f))(I+(I-\pi_{n})K'(y))h.
\end{equation}
The equation (\ref{m}) becomes 
$$
\psi_{n}-\pi_{n}K(\psi_{n}+(I-\pi_{n})(K((\psi_{n})+f))=\pi_{n}f,
$$
which is equivalent to $F_{n}(\psi_{n})=0$ and it is iteratively solved by applying the Newton-Kantorovich method.

Given an initial approximation $\psi_{n}^{(0)}$, the iterates $\psi_{n}^{(k)}, k=0,1,2,\dots,$ are given by
$$
\psi_{n}^{(k+1)}=\psi_{n}^{(k)}-((F_{n})'(\psi_{n}^{(k)}))^{-1}F_{n}(\psi_{n}^{(k)}),
$$
i.e.
\begin{equation}\label{n}
((F_{n})'(\psi_{n}^{(k)}))\psi_{n}^{(k+1)}=((F_{n})'(\psi_{n}^{(k)}))\psi_{n}^{(k)}-F_{n}(\psi_{n}^{(k)}).
\end{equation}
\newline
Define, according to (\ref{phi}),
\begin{equation}\label{phink}
\varphi_{n}^{(k)}=\psi_{n}^{(k)}+(I-\pi_{n})(K(\psi_{n}^{(k)})+f).
\end{equation}
Then, from (\ref{deff}) and (\ref{derf}), (\ref{n}) can be written as
\begin{equation}\label{so}
\begin{array}{c}
\psi_{n}^{(k+1)}-\pi_{n}K'(\varphi_{n}^{(k)})\psi_{n}^{(k+1)}-\pi_{n}K'(\varphi_{n}^{(k)})(I-\pi_{n})K'(\psi_{n}^{(k)})\psi_{n}^{(k+1)}\\
=\pi_{n}(K(\varphi_{n}^{(k)})+f)-\pi_{n}K'(\varphi_{n}^{(k)})\psi_{n}^{(k)}-\pi_{n}K'(\varphi_{n}^{(k)})(I-\pi_{n})K'(\psi_{n}^{(k)})\psi_{n}^{(k)}.
\end{array}
\end{equation}
Since $\psi_{n}^{(k)}\in \mathcal{S}^{d-1}_d(I,\mathcal{T}_n)$, we can write
\begin{equation}\label{phivet}
\psi_{n}^{(k)}=\sum_{j=1}^{N}x_{n}^{(k)}(j)B_{j}, \quad x_{n}^{(k)} \in \RR^N.
\end{equation}
Taking into account (\ref{phivet}), (\ref{so}) can be written as
\begin{equation}\label{lu}
\begin{array}{ll}
&\displaystyle\sum_{i=1}^{N}x_{n}^{(k+1)}(i)B_{i}-\pi_{n}K'(\varphi_{n}^{(k)})\sum_{j=1}^{N}x_{n}^{(k+1)}(j)B_{j}\\
&-\pi_{n}K'(\varphi_{n}^{(k)})(I-\pi_{n})K'(\psi_{n}^{(k)})\displaystyle\sum_{j=1}^{N}x_{n}^{(k+1)}(j)B_{j}\\
=&\pi_{n}K(\varphi_{n}^{(k)})+\pi_{n}f-\pi_{n}K'(\varphi_{n}^{(k)})\displaystyle\sum_{j=1}^{N}x_{n}^{(k)}(j)B_{j}\\
&-\pi_{n}K'(\varphi_{n}^{(k)})(I-\pi_{n})K'(\psi_{n}^{(k)})\displaystyle\sum_{j=1}^{N}x_{n}^{(k)}(j)B_{j}.
\end{array}
\end{equation}
Now, we apply the definition of the operator $\pi_{n}$ given in (\ref{propn}), and by identifying the coefficients of $B_{i}$, from (\ref{lu}), there results
\begin{equation}\label{lu2}
\begin{array}{ll}
&x_{n}^{(k+1)}(i)-\lambda_{i}\left(K'(\varphi_{n}^{(k)})\displaystyle\sum_{j=1}^{N}x_{n}^{(k+1)}(j)B_{j}\right)\\
&-\lambda_{i}\left(K'(\varphi_{n}^{(k)})(I-\pi_{n})K'(\psi_{n}^{(k)})\displaystyle\sum_{j=1}^{N}x_{n}^{(k+1)}(j)B_{j}\right)\\
=&\lambda_{i}\left(K(\varphi_{n}^{(k)})\right)+\lambda_{i}(f)-\lambda_{i}\left(K'(\varphi_{n}^{(k)})\displaystyle\sum_{j=1}^{N}x_{n}^{(k)}(j)B_{j}\right)\\
&-\lambda_{i}\left(K'(\varphi_{n}^{.(k)})(I-\pi_{n})K'(\psi_{n}^{(k)})\displaystyle\sum_{j=1}^{N}x_{n}^{(k)}(j)B_{j}\right), \quad    i=1,\ldots,N.
\end{array}
\end{equation}
Finally, by using the linearity of $\pi_n$ and of the coefficient functionals $\lambda_i$, we can write (\ref{lu2}) as follows:
\begin{equation}\label{lu3}
\begin{array}{ll}
&x_{n}^{(k+1)}(i)-\displaystyle\sum_{j=1}^{N}x_{n}^{(k+1)}(j)\lambda_{i}(K'(\varphi_{n}^{(k)})B_{j})\\
&-\displaystyle\sum_{j=1}^{N}x_{n}^{(k+1)}(j)\lambda_{i}\left(K'(\varphi_{n}^{(k)})(I-\pi_{n})K'(\psi_{n}^{(k)})B_{j}\right)\\
=&\lambda_{i}\left(K(\varphi_{n}^{(k)})\right)+\lambda_{i}(f)-\displaystyle\sum_{j=1}^{N}x_{n}^{(k)}(j)\lambda_{i}\left(K'(\varphi_{n}^{(k)})B_{j}\right)\\
&-\displaystyle\sum_{j=1}^{N}x_{n}^{(k)}(j)\lambda_{i}\left(K'(\varphi_{n}^{(k)})(I-\pi_{n})K'(\psi_{n}^{(k)})B_{j}\right), \quad   i=1,\ldots,N.
\end{array}
\end{equation}
Therefore, we have obtained the system of linear equations (\ref{lu3}) of size $N$, whose matrix form is
\begin{equation}\label{p}
\left(I-A_{n}^{(k)}-B_{n}^{(k)}\right)x_{n}^{(k+1)}=d_{n}^{(k)},
\end{equation}
where, for $i,j=1,2,\ldots,N$,
\begin{itemize}
	\item $A_{n}^{(k)}(i,j)=\lambda_{i}\left(K'(\varphi_{n}^{(k)})B_{j}\right)$,
	\item $B_{n}^{(k)}(i,j)= \lambda_{i}\left(K'(\varphi_{n}^{(k)})(I-\pi_{n})K'(\psi_{n}^{(k)})B_{j}\right)$,
	\item $d_{n}^{(k)}(i)=\lambda_{i}\left(K(\varphi_{n}^{(k)})\right)+\lambda_{i}(f)-(A_{n}^{(k)}x_{n}^{(k)})(i)-(B_{n}^{(k)}x_{n}^{(k)})(i)$,
\end{itemize}
and $\varphi_{n}^{(k)}$ is given by (\ref{phink}).

\subsubsection{Spline collocation method}
The equation (\ref{e}) is approximated by
$$
\varphi_{n}^{C}-\pi_{n}K(\varphi_{n}^{C})=\pi_{n}f
$$
and hence $\varphi_{n}^{C}\in \mathcal{S}^{d-1}_d(I,\mathcal{T}_n)$. Define
$$
G_{n}(y)=y-\pi_{n}K(y)-\pi_{n}f, \hspace{3mm} y\in \mathcal{S}^{d-1}_d(I,\mathcal{T}_n)
$$
and solve
$$
G_{n}(\varphi_{n}^{C})=0
$$
iteratively by using the Newton-Kantorovich method. Let $\zeta_{n}^{(0)}$ be an initial approximation and the iterates $\zeta_{n}^{(k)}$, $k=0,1,2,\ldots,$ are
\begin{equation}\label{r}
\zeta_{n}^{(k+1)}-\pi_{n}K'(\zeta_{n}^{(k)})\zeta_{n}^{(k+1)}=\pi_{n}(K(\zeta_{n}^{(k)})+f)-\pi_{n}K'(\zeta_{n}^{(k)})\zeta_{n}^{(k)}.
\end{equation}
Let
$$
\zeta_{n}^{(k)}=\sum_{j=1}^{N}y_{n}^{(k)}(j)B_{j}, \quad y_{n}^{(k)} \in \RR^n,
$$
then (\ref{r}) is equivalent to the following system of linear equations of size $N$
\begin{equation}\label{q}
\left(I-C_{n}^{(k)}\right)y_{n}^{(k+1)}=r_{n}^{(k)},
\end{equation}
where, for $i,j=1,2,\ldots,N$,
\begin{itemize}
\item $C_{n}^{(k)}(i,j)=\lambda_{i}\left(K'(\zeta_{n}^{(k)})B_{j}\right)$,
\item $r_{n}^{(k)}(i)=\lambda_{i}\left(K(\zeta_{n}^{(k)})\right)+\lambda_{i}(f)-(C_{n}^{(k)}y_{n}^{(k)})(i)$. 
\end{itemize}

We remark that a comparison of (\ref{p}) and (\ref{q}) shows that the latter system is much simpler. Indeed, in the former it is necessary to construct an additional matrix and the right hand side has an extra term. 
Moreover, we notice that the elements of the matrix $C_{n}^{(k)}$ are similar to those of $A_{n}^{(k)}$, with $\zeta_{n}^{(k)}$ instead of $\varphi_{n}^{(k)}$.

\section{Numerical results}\label{num}

In this section we present two test equations, that are also considered in \cite{AST,GKV}. In \cite{AST}, the authors propose a superconvergent Nystr\"om method and, in \cite{GKV}, a modified projection method providing high order of convergence. In the numerical tests, both papers make use of projection methods based on discontinuous piecewise constant and $C^0$ piecewise linear polynomials.

Here, we consider both the space $\mathcal{S}^{1}_2(I,\mathcal{T}_n)$ of $C^1$ quadratic splines and $\mathcal{S}^{2}_3(I,\mathcal{T}_n)$ of $C^2$ cubic splines. In the first case we use two QIPs, proposed in \cite{DRS} and in \cite[p. 155]{dB}, denoted by $Q_{2},Q_{2}^{dB}$, respectively. For the cubic case we use the QIP constructed in \cite{DRS}, denoted by $Q_{3}$. For details concerning their definition and construction see \cite{dB,DRS}.

We recall that $Q_{2}$ is superconvergent on the set of evaluation points $\Xi_n$ \cite{DRS}. It is easy to verify the same property also for $Q_{2}^{dB}$. Therefore the following proposition holds.

\begin{proposition} If $\|x^{(4)}\|_{\infty}$ is bounded, then, for $\pi_{n}=Q_{2},Q_{2}^{dB}$
$$
\left|\pi_{n}x(\xi_i)-x(\xi_i)\right|=O(h^{4}), \qquad 0\leq i \leq 2n.
$$
\end{proposition}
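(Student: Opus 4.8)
The statement claims that the quadratic spline quasi-interpolating projectors $Q_2$ and $Q_2^{dB}$ are \emph{superconvergent of order four at the evaluation nodes} $\xi_i$, provided $x^{(4)}$ is bounded. My approach would be to verify this directly from the explicit local structure of each operator, exploiting the fact that on the uniform knot sequence the coefficient functionals $\lambda_i$ are given by fixed combinations of the discrete values $x_j=x(\xi_j)$ with known weights $\sigma_{i,j}$. The global error $\pi_n x - x$ at a node $\xi_i$ depends only on the finitely many B-splines whose support contains $\xi_i$, hence only on a bounded window of the values $x_{i-m},\dots,x_{i+m}$; so the whole question is local and amounts to a finite Taylor-expansion computation around $\xi_i$.

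First I would recall (from \cite{DRS}) that $Q_2$ already enjoys this property, so the only genuinely new content is the verification for $Q_2^{dB}$, the de Boor quasi-interpolant of \cite[p.~155]{dB}; the text itself says ``it is easy to verify the same property.'' Concretely, I would write $(\pi_n x)(\xi_i)=\sum_j c_j\, x_{i+j}$ with the $c_j$ the (mesh-independent) values obtained by evaluating the B-spline expansion at $\xi_i$, noting that exactness on $\mathcal{S}^{d-1}_d$ forces $\sum_j c_j=1$ and, more usefully, $\sum_j c_j\, p(\xi_{i+j})=p(\xi_i)$ for every polynomial $p$ of degree $\le 2$. Then, Taylor-expanding each $x_{i+j}=x(\xi_{i+j})$ about $\xi_i$ and using the uniform spacing of the nodes in $\Xi_n$ (so $\xi_{i+j}-\xi_i$ is an integer multiple of $h/2$), the degree-$0$, $1$, $2$ terms cancel by exactness. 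The order-four (rather than merely order-three) bound then follows because the cubic term $\sum_j c_j (\xi_{i+j}-\xi_i)^3$ also vanishes: this is precisely the node-symmetry hypothesis imposed on the QIP construction before Proposition~\ref{prop2} (the weights $\sigma_{i,j}$ at nodes symmetric about the center of $\mathrm{supp}(B_i)$ are equal, and the boundary functionals mirror each other), which makes the local stencil symmetric in the interior; the same symmetry argument applied to the special boundary functionals $\lambda_1,\dots,\lambda_d$ handles the nodes near $0$ and $1$. Hence the remainder is $\tfrac{1}{24}\sum_j c_j (\xi_{i+j}-\xi_i)^4 x^{(4)}(\eta_j)+o(h^4) = O(h^4)$ uniformly, using boundedness of $x^{(4)}$ and of the $c_j$.

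The main obstacle is not analytic but bookkeeping: one must pin down the exact weights $c_j$ of $Q_2^{dB}$ at each type of evaluation node (knot $t_i=\xi_{2i}$ versus midpoint $s_i=\xi_{2i-1}$), since these differ, and confirm that the relevant odd moment $\sum_j c_j(\xi_{i+j}-\xi_i)^3$ genuinely vanishes in every case — including the finitely many special near-boundary nodes, where the defining functionals are not the generic interior ones. I would organize this as two short lemmas (interior nodes by symmetry; boundary nodes by the mirrored-functional property), each reducing to the vanishing of a moment of a small explicit vector, and then assemble the Taylor remainder bound. Since $Q_2$ is already known to be superconvergent and $Q_2^{dB}$ shares the same uniform-mesh symmetry, the two computations are structurally identical, so in the write-up I would do $Q_2^{dB}$ in detail and remark that $Q_2$ is \cite{DRS}.
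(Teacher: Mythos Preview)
Your plan is sound and actually supplies what the paper omits: the paper gives no proof of this proposition, merely recalling that the result for $Q_2$ is established in \cite{DRS} and asserting that ``it is easy to verify the same property also for $Q_{2}^{dB}$.'' Your Taylor-expansion argument---exactness on $\mathbb{P}_2$ (a consequence of $\pi_n$ being a projector onto $\mathcal{S}^1_2$) to kill the degree-$0,1,2$ terms, then symmetry of the interior stencil (inherited from the symmetric $\lambda_i$ together with the B-spline values on the uniform mesh, noting that the $\xi_i$ are equispaced with step $h/2$) to kill the cubic moment---is exactly the kind of direct verification the paper's remark presupposes, and it matches how \cite{DRS} handles $Q_2$.

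One caution on the boundary case: the mirrored-functional property you invoke only pairs the near-$0$ nodes with the near-$1$ nodes; it does not produce symmetry of the stencil about an \emph{individual} boundary node $\xi_i$. Hence the vanishing of $\sum_j c_j(\xi_{i+j}-\xi_i)^3$ there cannot be deduced from symmetry and must be checked by inserting the explicit boundary weights of $Q_2^{dB}$. You do say you would reduce each boundary case to ``the vanishing of a moment of a small explicit vector,'' which is the right plan---just make clear in your write-up that this step is a finite numerical verification depending on the specific boundary coefficients in \cite[p.~155]{dB}, not another instance of the interior symmetry argument.
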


Therefore, from the above proposition we get
\begin{equation}\label{sup_punt} 
\begin{array}{l}
|\varphi(\xi_i)-\varphi_{n}^{C}(\xi_i)|=O(h^{4}),\\
|\varphi(\xi_i)-\varphi_{n}^{H}(\xi_i)|=O(h^{8}),
\end{array}
\end{equation}
for the methods (\ref{col2}) and (\ref{kul2}), respectively, with $\pi_{n}=Q_{2},Q_{2}^{dB}$.

The integrals appearing in (\ref{p}) and (\ref{q}), in the matrices $A_{n}^{(k)},B_{n}^{(k)},C_{n}^{(k)}$, in the vectors $d_{n}^{(k)},r_{n}^{(k)}$ and in $\varphi_{n}^{(k)}$ are computed numerically with high accuracy, by using a classical composite $m$-point Gauss-Legendre quadrature formula with $m=20$.

For all the tests, for increasing values of $n$, we compute the following maximum absolute error
$$
E_{\infty}^{\mu}=\max_{v\in \mathcal{G}}|\varphi(v)-\varphi_{n}^{H}(v)|,
$$
where $\mathcal{G}$ is a set of 1500 equally spaced points in $[0,1]$ and $\mu=H_{2}$, $H_{2}^{dB}$, $H_{3}$ in case of the method (\ref{kul2}) based on the spline operators $Q_{2}$, $Q_{2}^{dB}$, $Q_{3}$. We compute also
$$
E_{\infty}^{\mu}=\max_{v\in \mathcal{G}}|\varphi(v)-\varphi_{n}^{C}(v)|,
$$
where $\mu=C_{2}$, $C_{2}^{dB}$, $C_{3}$, in case of the method (\ref{col2}) based on the same above operators. For each error we compute the corresponding numerical convergence order $O_{\infty}^{\mu}$, obtained by the logarithm to base 2 of the ratio between two consecutive errors.

Moreover, we compute the maximum absolute error at the quasi-interpolation nodes
$$
ES^{\mu}=\max_{0\leq i\leq 2n}|\varphi(\xi_{i})-\varphi_{n}^{H}(\xi_{i})|,
$$
with $\mu=H_{2}$, $H_{2}^{dB}$ in case of the method (\ref{kul2}) based on the spline operators $Q_{2}$, $Q_{2}^{dB}$, for increasing values of $n$. Similarly, we define
$$
ES^{\mu}=\max_{0\leq i\leq 2n}|\varphi(\xi_{i})-\varphi_{n}^{C}(\xi_{i})|, 
$$
with $\mu=C_{2}$, $C_{2}^{dB}$ in case of collocation method (\ref{col2}) based on the same above operators. For each error we compute the corresponding numerical convergence order $O^{\mu}$. 

These numerical tests confirm the theoretical results proved in Section \ref{oc}. We remark that the presented methods provide an approximate solution of class $C^{1}$ when $X_n=\mathcal{S}_{2}^{1}(I,\mathcal{T}_{n})$ and of class $C^{2}$ when $X_n=\mathcal{S}_{3}^{2}(I,\mathcal{T}_{n})$.

\subsection*{Test 1}
Consider the following Hammerstein integral operator with a degenerate kernel, defined as follows
$$
K(x)(s)=\int_{0}^{1}p(s)q(t)x^{2}(t)dt, \hspace{5mm}s\in[0,1],
$$
where
$$
p(s)=cos(11\pi s), \hspace{4mm} q(t)=sin(11\pi t).
$$
Then $K$ is compact and $\varphi-K(\varphi)=f$ has a unique solution for $f\in C[0,1]$. We choose
$$
f(s)=\Bigg(1-\frac{2}{33\pi}\Bigg)cos(11\pi s), \hspace{2mm} s\in[0,1],
$$
so that
$$
\varphi(s)=cos(11\pi s), \hspace{2mm} s\in[0,1].
$$

By using computational procedures constructed in the Matlab environment, we obtain the results reported in Tables \ref{tab:1}, \ref{tab:2} and \ref{tab:3}, that confirm the theoretical ones stated in Theorem \ref{thmordbest} for the projection method with high order of convergence and in (\ref{estg}) for the spline collocation method. In particular, we have 
$$
\begin{array}{l}
E_{\infty}^{H_2}, E_{\infty}^{H_2^{dB}}= O(h^7), \quad E_{\infty}^{H_3}= O(h^8),\\
E_{\infty}^{C_2}, E_{\infty}^{C_2^{dB}}= O(h^3), \quad E_{\infty}^{C_3}= O(h^4),
\end{array}
$$
and, thanks to (\ref{sup_punt})
$$
ES_{\infty}^{H_2}, ES_{\infty}^{H_2^{dB}}= O(h^8), \quad ES_{\infty}^{C_2}, ES_{\infty}^{C_2^{dB}}= O(h^4).
$$ 

\begin{table}[ht!]
\caption{Spline projection method with high order of convergence and spline collocation method based on $Q_{2}$}\label{tab:1}
\begin{center}
\begin{tabular}{|c||cc|cc|cc|cc|cc|cc|}
\hline
$n $    & $E^{H_{2}}_{\infty}$ & $O^{H_{2}}_{\infty}$ & $ES^{H_{2}}$ & $O^{H_{2}}$& $E^{C_{2}}_{\infty}$ & $O^{C_{2}}_{\infty}$ & $ES^{C_{2}}$ & $O^{C_{2}}$\\
\hline
40  & 1.08(-06) &     & 6.97(-07) &     & 7.74(-03) &     & 4.98(-03) &            \\ 
\hline
80  & 4.08(-09) & 8.1 & 2.26(-09) & 8.2 & 6.77(-04) & 3.5 & 3.76(-04) & 3.7			\\
\hline
160 & 2.13(-11) & 7.6 & 6.31(-12) & 8.5 & 8.17(-05) & 3.0 & 2.43(-05) & 4.0	\\
\hline
320 & 1.42(-13) & 7.2	& 2.14(-14) & 8.2	& 1.01(-05) & 3.0	& 1.53(-06) & 4.0		\\
\hline
640 &           &     &-          & -		& 1.26(-06) & 3.0	& 9.57(-08) & 4.0         \\
\hline
\end{tabular}
\end{center}
\end{table}

\begin{table}[ht!]
\caption{Spline projection method with high order of convergence and spline collocation method based on $Q_{2}^{dB}$}\label{tab:2}
\begin{center}
\begin{tabular}{|c||cc|cc|cc|cc|}
\hline
$n $    & $E^{H_{2}^{dB}}_{\infty}$ & $O^{H_{2}^{dB}}_{\infty}$&$ES^{H_{2}^{dB}}$ & $O^{H_{2}^{dB}}$ & $E^{C_{2}^{dB}}_{\infty}$ & $O^{C_{2}^{dB}}_{\infty}$ & $ES^{C_{2}^{dB}}$ & $O^{C_{2}^{dB}}$\\
\hline    
40  & 1.50(-06) &     & 1.41(-06) &     & 9.12(-03) &     & 8.58(-03) &           \\
\hline
80  & 1.12(-08) & 7.1 & 7.79(-09) & 7.5 & 7.97(-04) & 3.5 & 5.54(-04) & 4.0		\\
\hline
160 & 8.07(-11) & 7.1 & 3.28(-11) & 7.9 & 8.58(-05) & 3.2 & 3.49(-05) & 4.0		\\
\hline
320 & 6.13(-13) & 7.0	& 1.32(-13) & 8.0 & 1.02(-05) & 3.0	& 2.19(-06) & 4.0      \\
\hline
640 & 5.33(-15) & 6.8	& -         & -	  & 1.27(-06) & 3.0 & 1.37(-07) & 4.0                   \\
\hline
\end{tabular}
\end{center}
\end{table}

\begin{table}[ht!]
\caption{Spline projection method with high order of convergence and spline collocation method based on $Q_{3}$}\label{tab:3}
\begin{center}
\begin{tabular}{|c||cc|cc|}
\hline
$n $    & $E^{H_{3}}_{\infty}$ & $O^{H_{3}}_{\infty}$ & $E^{C_{3}}_{\infty}$ & $O^{C_{3}}_{\infty}$ \\
\hline
40  & 2.38(-08) &     & 1.53(-03) &      \\
\hline
80  & 9.40(-11) & 8   & 9.27(-05) & 4.0  \\
\hline
160 & 1.12(-13) & 9.7 & 5.58(-06) & 4.1    \\
\hline
320 & -         & -	  & 3.43(-07) & 4.0		 \\
\hline
640 & -         & -   & 1.34(-08) & 4.7	   \\
\hline
\end{tabular}
\end{center}
\end{table}

\subsection*{Test 2}
Consider the following Urysohn integral equation
$$
\varphi(s)-\int_{0}^{1}\frac{dt}{s+t+\varphi(t)}=f(s), \hspace{5mm} 0\leq s\leq 1,
$$
where $f$ is chosen so that $\varphi(t)=\frac{1}{t+c}$, $c>0$, is a solution.

We consider $c=1$, $c=0.1$ and we remark that the exact solution is ill behaved in the case $c=0.1$.

Firstly, we consider $c=1$ and we obtain the results presented in Tables \ref{tab:4}, \ref{tab:5} and \ref{tab:6}.

\begin{table}[ht!]
\caption{Spline projection method with high order of convergence and spline collocation method based on $Q_{2}$, $c=1$}\label{tab:4}
\begin{center}
\begin{tabular}{|c||cc|cc|cc|cc|}
\hline
$n $    & $E^{H_{2}}_{\infty}$ & $O^{H_{2}}_{\infty}$ & $ES^{H_{2}}$ & $O^{H_{2}}$& $E^{C_{2}}_{\infty}$ & $O^{C_{2}}_{\infty}$ & $ES^{C_{2}}$ & $O^{C_{2}}$  \\
\hline
4    &  8.48(-08) &    & 5.06(-08) &     & 6.85(-04) &     & 3.84(-04) & \\
\hline
8    & 7.84(-10) & 6.8 & 3.50(-10) & 7.2 & 9.54(-05) & 2.8 & 3.84(-05) & 3.3\\
\hline
16   & 5.08(-12) & 7.3 & 1.47(-12) & 7.9 & 1.21(-05) & 3.0 & 3.10(-06) & 3.6\\
\hline
32   & 3.08(-14) & 7.4 & 5.55(-15) & 8.0 & 1.50(-06) & 3.0 & 2.21(-07) & 3.8\\
\hline
64   & -         &  -  &   -       &   - & 1.85(-07) & 3.0 & 1.48(-08) & 3.0  \\
\hline
\end{tabular}
\end{center}
\end{table}

\begin{table}[ht!]
\caption{Spline projection method with high order of convergence and spline collocation method based on $Q_{2}^{dB}$, $c=1$}\label{tab:5}
\begin{center}
\begin{tabular}{|c||cc|cc|cc|cc|}
\hline
$n $    & $E^{H_2^{dB}}_{\infty}$ & $O^{H_2^{dB}}_{\infty}$ & $ES^{H_2^{dB}}$ & $O^{H_2^{dB}}$& $E^{C_2^{dB}}_{\infty}$ & $O^{C_2^{dB}}_{\infty}$ & $ES^{C_2^{dB}}$ & $O^{C_2^{dB}}$\\
\hline    
4    & 2.00(-07) &     & 1.33(-07) &     & 7.27(-04) &     & 4.59(-04) &  \\
\hline
8    & 2.75(-09) & 6.2 & 1.42(-09) & 6.6 & 9.96(-05) & 2.9 & 4.67(-05) & 3.3\\
\hline
16   & 2.67(-11) & 6.7 & 9.34(-12) & 7.2 & 1.25(-05) & 3.0 & 3.84(-06) & 3.6\\
\hline
32   & 2.24(-13) & 6.9 & 4.60(-14) & 7.7 & 1.53(-06) & 3.0 & 2.78(-07) & 3.8\\
\hline
64   & -         &  -  & -         &   - & 1.87(-07) & 3.0 & 1.87(-08) & 3.9      \\
\hline
\end{tabular}
\end{center}
\end{table}

\begin{table}[ht!]
\caption{Spline projection method with high order of convergence and spline collocation method based on $Q_{3}$, $c=1$}\label{tab:6}
\begin{center}
\begin{tabular}{|c||cc|cc|}
\hline
$n $    & $E^{H_{3}}_{\infty}$ & $O^{H_{3}}_{\infty}$  & $E^{C_{3}}_{\infty}$ & $O^{CC_{3}}_{\infty}$\\
\hline
4      & 1.58(-09) &     & 9.02(-05) &      \\
\hline
8      & 3.30(-12) & 8.9 & 7.77(-06) & 3.5   \\
\hline
16     & 6.55(-15) & 9.0 & 6.84(-07) & 3.5  \\
\hline
32     & -        & -    & 5.00(-08) & 3.8		  \\
\hline
64     & -        & -    & 3.36(-09) & 3.9		\\
\hline
\end{tabular}
\end{center}
\end{table}

When we consider $c=0.1$ by using the same procedures we get the results in the Tables \ref{tab:7}, \ref{tab:8} and \ref{tab:9}.

\begin{table}[ht!]
\caption{Spline projection method with high order of convergence and spline collocation method based on $Q_{2}$, $c=0.1$}\label{tab:7}
\begin{center}
\begin{tabular}{|c||cc|cc|cc|cc|}
\hline
$n $    & $E^{H_{2}}_{\infty}$ & $O^{H_{2}}_{\infty}$ & $ES^{H_{2}}$ & $O^{H_{2}}$& $E^{C_{2}}_{\infty}$ & $O^{C_{2}}_{\infty}$& $ES^{C_{2}}$ & $O^{C_{2}}$  \\
\hline
4    &  4.50(-07)  & &  1.13(-07) &   & 6.80(-01)  &  &  5.51(-01)  & \\
\hline
8    &  3.87(-10) &  10.2 & 2.51(-10)  & 8.8& 2.67(-01)  & 1.3  &   2.10(-01)  & 1.4\\
\hline
16    & 1.00(-11)&   5.3 & 1.01(-12)  & 8.0 & 7.04(-02) &  1.9  &5.12(-02) &  2.0\\
\hline
32     & 1.21(-13)  & 6.4 & 1.07(-14) &  6.7& 1.29(-02)  & 2.4  &  7.99(-03)  & 2.7\\
\hline
64      & -    &  -   &       &      & 1.86(-03)   &   2.8    &   8.65(-04)       &   3.2  \\
\hline
\end{tabular}
\end{center}
\end{table}

\begin{table}[ht!]
\caption{Spline projection method with high order of convergence and spline collocation method based on $Q_{2}^{dB}$, $c=0.1$}\label{tab:8}
\begin{center}
\begin{tabular}{|c||cc|cc|cc|cc|}
\hline
$n $    & $E^{H_2^{dB}}_{\infty}$ & $O^{H_2^{dB}}_{\infty}$ & $ES^{H_2^{dB}}$ & $O^{H_2^{dB}}$& $E^{C_2^{dB}}_{\infty}$ & $O^{C_2^{dB}}_{\infty}$& $ES^{C_2^{dB}}$ & $O^{C_2^{dB}}$\\
\hline    
4    & 2.89(-06) &   & 1.39(-06) & &  7.44(-01)   & & 6.41(-01)&   \\
\hline
8    & 1.26(-08)  & 7.8 & 3.43(-09) &  8.7  & 2.94(-01)  & 1.3  & 2.48(-01)  & 1.4\\
\hline
16     &  8.00(-11) &  7.3 &1.22(-11) &  8.1 & 7.77(-02)  & 1.9 &  6.14(-02)   &2.0\\
\hline
32     &  5.88(-13)  & 7.1& 4.80(-14) &  8.0 & 1.42(-02)  & 2.4 &  9.87(-03)&   2.6\\
\hline
64      & -        &  -       & -  &     -      & 2.02(-03)    &  2.8     &   1.11(-03)       &  3.2    \\
\hline
\end{tabular}
\end{center}
\end{table}

\begin{table}[ht!]
\caption{Spline projection method with high order of convergence and spline collocation method based on $Q_{3}$, $c=0.1$}\label{tab:9}
\begin{center}
\begin{tabular}{|c||cc|cc|}
\hline
$n $    & $E^{H_{3}}_{\infty}$ & $O^{H_{3}}_{\infty}$ & $E^{C_{3}}_{\infty}$ & $O^{C_{3}}_{\infty}$ \\
\hline
4     &  1.97(-08)&     &   4.17(-01)     &             \\
\hline
8      & 1.16(-11) &  10.7 & 1.03(-01) &  2.0 \\
\hline
16     & 1.29(-13)  & 6.5 &  1.70(-02)   &2.6\\
\hline
32    & - &  -   & 1.96(-03)  & 3.1 \\
\hline
64   &  -  & -   &  1.75(-04) &  3.5\\
\hline
\end{tabular}
\end{center}
\end{table}

Also in test 2, the theoretical results stated in Theorem \ref{thmordbest} for the projection method with high order of convergence and in (\ref{estg}) for the spline collocation method are confirmed. In particular, we have 
$$
\begin{array}{l}
E_{\infty}^{H_2}, E_{\infty}^{H_2^{dB}}= O(h^7), \quad E_{\infty}^{H_3}= O(h^8),\\
E_{\infty}^{C_2}, E_{\infty}^{C_2^{dB}}= O(h^3), \quad E_{\infty}^{C_3}= O(h^4),
\end{array}
$$
and, thanks to (\ref{sup_punt})
$$
ES_{\infty}^{H_2}, ES_{\infty}^{H_2^{dB}}= O(h^8), \quad ES_{\infty}^{C_2}, ES_{\infty}^{C_2^{dB}}= O(h^4).
$$

\section{Conclusions}
In this paper we have proposed spline projection methods for the numerical solution of nonlinear integral equations. In particular, we have considered spline quasi-interpolating projectors on a bounded interval for defining a projection method with high order of convergence and a collocation method of classical type. We have studied their order of convergence and we have analysed the implementation details. Finally, we have presented some numerical examples, illustrating the approximation properties of the proposed methods. The next research step, that is a work in progress, is the study of nonlinear integral equations with non smooth kernels.

\section*{Acknowledgements} 
The authors thank the University of Torino for its support to their research.



\end{document}